\documentclass{amsart}
\usepackage{amssymb}
\newtheorem{theorem}{Theorem}[section]
\newtheorem{proposition}[theorem]{Proposition}
\newtheorem{corollary}[theorem]{Corollary}
\newtheorem{lemma}[theorem]{Lemma}

\theoremstyle{definition}

\theoremstyle{remark}

\newtheorem{claim}[theorem]{Claim}

\numberwithin{equation}{section}

\begin{document}

\title[Polar actions on certain principal bundles]{Polar actions on certain principal bundles \\
over symmetric spaces of compact type}
\author{Marco Mucha}
\address{ Instituto de Matem\'atica e Estat\'istica, Universidade de S\~ao Paulo, Rua do Mat\~ao, 1010 S\~ao Paulo, SP 05508-090, Brazil}
\email{mmuchao@ime.usp.br}
\thanks{This research was supported by FAPESP grant 2007/59288-2.}
\subjclass[2010]{Primary 57S15; Secondary 53C35}
\keywords{Polar action, symmetric space}
\date{}

\commby{Chuu-Lian Terng}
\begin{abstract}
We study polar actions with horizontal sections on the total space of 
certain principal bundles $G/K\to G/H$ with base a symmetric space of 
compact type. We classify such actions up to orbit equivalence in many cases. In particular, 
we exhibit examples of hyperpolar actions with cohomogeneity 
greater than one on locally irreducible homogeneous spaces with nonnegative 
curvature which are not homeomorphic to symmetric spaces. 
\end{abstract}

\maketitle

\section{Introduction}

An isometric action of a compact Lie group on a Riemannian manifold $M$
is called \emph{polar} if there exists an isometrically immersed connected complete
submanifold $i:\Sigma\to M$ which meets every orbit and always orthogonally; any
such $\Sigma$ is called a \emph{section}, and if the section is flat, the action is
called \emph{hyperpolar}. Note that the immersion $i$ might not be injective. For
an introduction to polar actions, see~\cite{BCO,PaTe}. Most of the known
examples of polar actions are polar representations~\cite{Da}, or actions on
symmetric spaces~\cite{Go,Ko2001,Ko2007,PoTh1999}. In this paper, we propose to
consider polar actions on a certain class of nonsymmetric, homogeneous spaces.

Let $G/H$ be a symmetric space of compact type endowed with a Riemannian metric 
induced from some $\mathrm{Ad}(G)$-invariant inner product $g$ 
on the Lie algebra $\mathfrak g$ of $G$. Write $\mathfrak g=\mathfrak h +\mathfrak p_1$
for the Cartan decomposition. We assume that $H$ is connected, and
$\mathfrak h=\mathfrak k\oplus\mathfrak p_2$
(direct sum of ideals of $\mathfrak h$). Let $K$ denote the connected Lie
subgroup of $G$ with Lie algebra $\mathfrak k$.
The natural projection $\pi:G/K\rightarrow G/H$ is an
equivariant principal $H/K$-bundle with respect to the natural projection
$\pi_G:G\times H/K\rightarrow G$. Define $\mathfrak q=\mathfrak p_1\oplus \mathfrak p_2$
and consider the $\mathrm{Ad}(K)$-invariant splitting
$\mathfrak g=\mathfrak k+\mathfrak q$. For each $s>0$, we define an
$\textrm{Ad}(K)$-invariant inner product $g_s$ on
$\mathfrak q$ by $g_s=g|_{\mathfrak p_1\times \mathfrak p_1}+
s^2g|_{\mathfrak p_2\times\mathfrak p_2}.$
We also denote by $g_s$ the induced $G$-invariant metric on $G/K$. It is easy to show
that the action of $G\times H/K$ on  $(G/K,g_s)$ defined by $(g,hK).g'K=gg'h^{-1}K$
is almost effective and isometric. If $L$ is a Lie subgroup of $G$, then the induced
action of $L\times H/K$ on $(G/K,g_s)$ is called the \emph{natural lifting} of the action
of $L$ on $G/H$. Our main result can be stated as follows.
(Recall that two isometric actions are called \emph{orbit equivalent} 
if there exists an isometry between the target spaces which maps orbits 
onto orbits.) 

\begin{theorem}
\label{th:main}
Let $L$ be a closed Lie subgroup of $G$ which acts polarly on $G/H$, and let
$\Sigma$ be a section containing $1H$. We identify $\mathrm{T}_{1H}\Sigma=\mathfrak m\subset \mathfrak p_1$ as usual. If $[\mathfrak m,\mathfrak m]\subset \mathfrak k$,
then $L\times H/K$ acts polarly on $(G/K,g_s)$ with sections horizontal
with respect to $\pi$. Conversely, every polar action on 
$(G/K,g_s)$ of a connected closed Lie subgroup of $G\times H/K$ with horizontal sections
is orbit equivalent to the natural lifting of a polar action on~$G/H$.
\end{theorem}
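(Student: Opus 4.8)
My plan is to prove the two implications by a single mechanism: a horizontal, totally geodesic submanifold of $(G/K,g_s)$ through $1K$ corresponds, under $\pi$, to a totally geodesic submanifold of $G/H$ through $1H$, and the bracket condition $[\mathfrak m,\mathfrak m]\subseteq\mathfrak k$ is precisely what makes this correspondence biunivocal and compatible with the group actions. I will use throughout that $\pi\colon(G/K,g_s)\to(G/H,g)$ is a $G$-equivariant Riemannian submersion whose horizontal space at $gK$ is $dL_g(\mathfrak p_1)$ (so the vertical distribution $\mathcal V$ at $gK$ is $dL_g(\mathfrak p_2)$), that $G\times H/K$ acts on $(G/K,g_s)$ transitively by isometries preserving $\mathcal V$ (bundle isometries), that $K\triangleleft H$ because $\mathfrak k$ is an ideal of $\mathfrak h$, and the standard fact that sections of polar actions are totally geodesic, so that $\mathfrak m$, and later $\tilde{\mathfrak m}:=\mathrm T_{1K}\tilde\Sigma$, are Lie triple systems.

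For the first (sufficiency) statement, assume $[\mathfrak m,\mathfrak m]\subseteq\mathfrak k$. Then $\mathfrak g':=\mathfrak m\oplus[\mathfrak m,\mathfrak m]$ is a subalgebra of $\mathfrak k\oplus\mathfrak p_1$ with $\mathfrak g'\cap\mathfrak k=[\mathfrak m,\mathfrak m]$, and $(\mathfrak g',[\mathfrak m,\mathfrak m])$ is a symmetric pair whose ``$\mathfrak p$''-part is $\mathfrak m$; put $\tilde\Sigma:=G'\cdot 1K=\exp(\mathfrak m)\cdot 1K$, where $G'=\langle\exp\mathfrak g'\rangle$. By construction $\mathrm T_{1K}\tilde\Sigma=\mathfrak m\subseteq\mathfrak p_1$, so $\tilde\Sigma$ is horizontal at $1K$, and $\pi$ restricts on $\tilde\Sigma$ to a local isometry onto $\Sigma=\exp(\mathfrak m)\cdot 1H$. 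A direct computation with the geodesic equation for the $G$-invariant metric $g_s$, using $\mathrm{Ad}(G)$-invariance of $g$, shows that $t\mapsto\exp(tX)\cdot 1K$ is a $g_s$-geodesic for every $X\in\mathfrak p_1$; since $\tilde\Sigma$ is $G'$-homogeneous and $G'$ acts by isometries, $\tilde\Sigma$ is complete, connected, everywhere horizontal and totally geodesic. Finally $\tilde\Sigma$ is a section for $L\times H/K$: it meets every orbit because, given $gK$, one may pick $l\in L$ with $l\cdot gH=\exp(X)\cdot 1H\in\Sigma$, and then $lgK=\exp(X)h_0K$ for some $h_0\in H$, whence $(l,h_0K)\cdot gK=\exp(X)K\in\tilde\Sigma$ using $K\triangleleft H$; and it meets orbits orthogonally because, at $p\in\tilde\Sigma$, the orbit tangent space is the vertical space plus a horizontal part that $d\pi$ maps isometrically onto $\mathrm T_{\pi(p)}(L\cdot\pi(p))$, while $\mathrm T_p\tilde\Sigma$ is horizontal — hence orthogonal to the vertical — and is mapped by $d\pi$ isometrically onto $\mathrm T_{\pi(p)}\Sigma$, which is orthogonal to $\mathrm T_{\pi(p)}(L\cdot\pi(p))$ since $\Sigma$ is a section downstairs.

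For the converse, let $\hat L\subseteq G\times H/K$ be connected, closed, acting polarly on $(G/K,g_s)$ with a horizontal section $\tilde\Sigma$; conjugating by an element of $G\times H/K$ I may assume $1K\in\tilde\Sigma$. Since $\hat L$ is compact, $L:=\pi_G(\hat L)\subseteq G$ is closed and connected. Horizontality gives $\tilde{\mathfrak m}=\mathrm T_{1K}\tilde\Sigma\subseteq\mathfrak p_1$; because $\tilde\Sigma$ is totally geodesic and horizontal, the O'Neill tensor $A$ of $\pi$ vanishes along $\tilde\Sigma$, and since $A_XY=\tfrac12[X,Y]_{\mathfrak p_2}$ at $1K$ for $X,Y\in\mathfrak p_1$ (the vertical component of the bracket, i.e.\ the curvature of the canonical connection of $\pi$), we get $[\tilde{\mathfrak m},\tilde{\mathfrak m}]_{\mathfrak p_2}=0$, that is, $[\tilde{\mathfrak m},\tilde{\mathfrak m}]\subseteq\mathfrak k$; moreover $\tilde\Sigma$ is then the horizontal lift constructed above and $\Sigma':=\pi(\tilde\Sigma)=\exp(\tilde{\mathfrak m})\cdot 1H$ is a complete connected totally geodesic submanifold of $G/H$. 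As $\pi$ carries each $\hat L$-orbit onto the corresponding $L$-orbit ($\pi((l,hK)\cdot gK)=l\cdot gH$), the orthogonality argument of the previous paragraph, read backwards, shows $\Sigma'$ meets every $L$-orbit orthogonally; hence $L$ acts polarly on $G/H$ with section $\Sigma'$, whose tangent space at $1H$ satisfies the bracket condition.

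It remains to identify the $\hat L$-action with the natural lifting of this $L$-action up to orbit equivalence, which I will do by showing that $\hat L$ and $L\times H/K$ have the \emph{same} orbits on $G/K$. Since $\pi_G(\hat L)=L$ we have $\hat L\subseteq L\times H/K$, and the $(L\times H/K)$-orbit through $gK$ equals $\pi^{-1}(L\cdot gH)$; so it suffices to prove that every $\hat L$-orbit is $\pi$-saturated. At a regular point $q$ — which, after moving by an element of $\hat L$, lies in $\tilde\Sigma$ — the orbit tangent space is the orthogonal complement of the horizontal space $\mathrm T\tilde\Sigma$, hence contains the whole vertical space; as $\hat L$ preserves $\mathcal V$ and acts by isometries, this persists at every point of a regular orbit, and since $\hat L$-orbits are closed (compactness) while $\pi$-fibres are connected, a regular orbit contains every $\pi$-fibre it meets, i.e.\ is $\pi$-saturated. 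Finally, the set of $\pi$-saturated orbits is closed in $(G/K)/\hat L$: if $[p_n]\to[p]$ with each $[p_n]$ $\pi$-saturated, one lifts to $p_n\to p$ in $G/K$ and, for $x\in\pi^{-1}(L\cdot\pi(p))$, uses local triviality of $\pi$ to produce $x_n\in\pi^{-1}(L\cdot\pi(p_n))=\hat L\cdot p_n$ with $x_n\to x$, so that compactness of $\hat L$ gives $x\in\hat L\cdot p$. Since the $\pi$-saturated orbits then form a closed set containing the dense set of regular orbits, \emph{every} orbit is $\pi$-saturated, which gives the orbit equivalence. I expect this last passage — from the regular stratum to all orbits, i.e.\ the closedness of the $\pi$-saturated locus — together with the precise identification of the O'Neill tensor of $\pi$, to be the points requiring the most care; the remainder is bookkeeping with the Riemannian submersion $\pi$ and with the symmetric-pair structure of $\mathfrak g'$.
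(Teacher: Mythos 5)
Your proof is correct and follows essentially the same strategy as the paper's: you lift the section as $\exp(\mathfrak m)\cdot 1K$ (the paper takes the orbit $S(1K)$ of the subgroup with Lie algebra $\mathfrak m+[\mathfrak m,\mathfrak m]$, which is the same set), horizontality comes from $[\mathfrak m,\mathfrak m]\subset\mathfrak k$ and orthogonality from the Riemannian submersion $\pi$; for the converse you observe that at a regular point of a horizontal section the orbit must contain the vertical space, hence regular orbits are unions of $\pi$-fibres, and then you extend to all orbits and project. The one substantive divergence is that last extension step: the paper passes from ``same orbits on the regular stratum'' to ``same orbits everywhere'' by citing Lemma 3.6 of Gorodski--Thorbergsson, whereas you prove directly that the saturated locus is closed, using local triviality of $\pi$, density of regular points and compactness of $\hat L$; your limit argument is correct and makes the proof self-contained, which is a modest gain. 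You also include material the paper does not need: the O'Neill-tensor computation showing that $\mathrm T_{1K}\tilde\Sigma$ again satisfies the bracket condition (true, but not required for the stated converse), and the explicit check that $t\mapsto\exp(tX)\cdot 1K$ is a $g_s$-geodesic for $X\in\mathfrak p_1$. One small imprecision: you call $\pi(\tilde\Sigma)$ totally geodesic before polarity of the action on $G/H$ is established --- at that stage you only know $[\tilde{\mathfrak m},\tilde{\mathfrak m}]\subset\mathfrak k$, not that $\tilde{\mathfrak m}$ is a Lie triple system --- but since you never use total geodesy of $\pi(\tilde\Sigma)$, only that it is the isometrically immersed image of the complete connected manifold $\tilde\Sigma$ meeting every $L$-orbit orthogonally, this does not affect the argument.
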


We remark that there exist many polar actions on $G/H$ satisfying 
the assumption on $\mathfrak m$ in Theorem~\ref{th:main} (for some $\mathfrak k$).
For example, this assumption is satisfied by hyperpolar actions (here $[\mathfrak m,\mathfrak m]=0$), and arbitrary polar actions if $G/H$ is either an irreducible Hermitian symmetric space or a Wolf space. In fact, sections of polar actions on the last two classes of spaces are
totally real with respect to the complex and quaternionic structure respectively;
see~\cite{PoTh1999,Te}.

In~\cite{Zi}, Ziller shows (in a more general context) that the metric $g_s$ is
$G\times H/K$-naturally reductive for all $s>0$, and $G\times H/K$-normal homogeneous if $G$ is simple and $s<1$. On the other hand, we have that
$(G/K,g_s)$ is locally irreducible if $G$ is simple; see section~\ref{sec:4}.
Hence, we get examples of hyperpolar actions with cohomogeneity
greater than one on locally irreducible naturally reductive spaces with nonnegative
curvature which are not homeomorphic to symmetric spaces; see section~\ref{sec:5}.

In many cases, we can use results of~\cite{OlRe,Re} to find the identity component of the isometry group of $(G/K,g_s)$; see Propositions~\ref{pro:iso1} and~\ref{pro:iso2}. In particular we have:
\begin{corollary}
\label{cor:1}
Let $G$ be a simple Lie group. We assume that either $s\neq1$, or $s=1$ and $K$ is nontrivial. If $(G/K,g_s)$ is neither isometric to a round sphere, nor to a real projective space, then every polar action on $(G/K,g_s)$ with horizontal sections is orbit equivalent to the natural lifting of a polar action on $G/H$.
\end{corollary}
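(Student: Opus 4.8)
The plan is to reduce the statement to the converse direction of Theorem~\ref{th:main}, the extra input being a description of $\mathrm{Isom}(G/K,g_s)$. Theorem~\ref{th:main} already classifies, up to orbit equivalence, the polar actions with horizontal sections of connected closed subgroups of $G\times H/K$; so it will be enough to show that, up to orbit equivalence, an arbitrary polar action on $(G/K,g_s)$ with horizontal sections is the action of a subgroup of $G\times H/K$, and in fact the natural lifting of a polar action on $G/H$.

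By Propositions~\ref{pro:iso1} and~\ref{pro:iso2}, under the hypotheses of the corollary --- $G$ simple, $s\neq1$ or $K$ nontrivial, and $(G/K,g_s)$ not isometric to a round sphere or a real projective space --- the identity component of $\mathrm{Isom}(G/K,g_s)$ is the image $\bar G$ of $G\times H/K$, that is, $(G\times H/K)$ modulo the finite central kernel of ineffectivity of its action on $G/K$. Now let a compact Lie group $\tilde L$ act polarly on $(G/K,g_s)$ with a horizontal section; replacing $\tilde L$ by its image in the isometry group, we may take $\tilde L$ to be a closed subgroup of $\mathrm{Isom}(G/K,g_s)$ with the same orbits and section. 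Its identity component $\tilde L^0$ then lies in $\mathrm{Isom}(G/K,g_s)^0=\bar G$, and one checks that $\tilde L^0$ still acts polarly with a horizontal section (its orbits have the same dimension as the $\tilde L$-orbits, being their connected components). Pulling $\tilde L^0$ back along the finite covering $G\times H/K\to\bar G$ and taking the identity component gives a connected closed subgroup $L'\subseteq G\times H/K$ with exactly the same orbits on $G/K$ as $\tilde L^0$. By the converse direction of Theorem~\ref{th:main}, the $L'$-action, hence the $\tilde L^0$-action, is orbit equivalent to the natural lifting of a polar action on $G/H$; in particular, after an isometry of $(G/K,g_s)$, the $\tilde L^0$-orbit foliation is the $\pi$-preimage of a foliation of $G/H$ whose leaves are the orbits of a polar action.

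The remaining --- and I expect most delicate --- step is to pass from $\tilde L^0$ to the possibly disconnected group $\tilde L$. Here the horizontality of the section is decisive: it bounds the cohomogeneity of the $\tilde L$-action by $\dim G/H<\dim G/K$, which is incompatible with any component of $\tilde L$ acting transversally to $\pi$ (that would force the cohomogeneity beyond $\dim G/H$, precluding a horizontal section). Concretely, I would argue directly --- carrying out for $\tilde L$, and not only for $\tilde L^0$, the analysis underlying the converse of Theorem~\ref{th:main} --- that the vertical distribution of $\pi$ is tangent to the $\tilde L$-orbits, so that the $\tilde L$-orbit foliation is $\pi$-saturated and descends to a foliation $\bar{\mathcal F}$ on $G/H$, with the horizontal sections of $\tilde L$ projecting to sections of $\bar{\mathcal F}$; then $\bar{\mathcal F}$ is a polar foliation, and since it is the orbit foliation of the action induced by $\tilde L$ on $G/H$, it is homogeneous, i.e. the orbit foliation of a polar action of some closed --- in general disconnected --- subgroup $L$ of $G$. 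Its natural lifting is then orbit equivalent to the given action, which is what we want, the conclusion of Theorem~\ref{th:main} allowing $L$ to be disconnected. The crux is establishing this $\pi$-saturation and homogeneity for the full group $\tilde L$; it is precisely the horizontal-section hypothesis, together with the placement of $\tilde L^0$ inside $\bar G$, that I expect to make it possible.
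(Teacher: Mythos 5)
Your main line coincides with the paper's own (implicit) proof: the paper gives no separate argument for Corollary~\ref{cor:1}, which is meant to follow at once from Propositions~\ref{pro:iso1} and~\ref{pro:iso2} (identifying the identity component of the isometry group with $G\times H/K$, up to the finite ineffectivity kernel) combined with the converse direction of Theorem~\ref{th:main}; your reduction --- replace the acting group by its image in the isometry group, pass to the identity component (still polar with the same horizontal section; note the parenthetical ``same dimension of orbits'' only gives orthogonality, the fact that the section meets every orbit of the identity component needs the standard argument via completeness/total geodesy of sections and a minimizing geodesic to the smaller orbit), and lift along the finite cover $G\times H/K\to\bar G$ --- is exactly that. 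Your final paragraph, about a possibly disconnected acting group $\tilde L$, addresses a point the paper simply glosses over (the converse of Theorem~\ref{th:main} is stated only for connected subgroups, and the corollary is implicitly read for connected groups of isometries); as written it is a plan rather than a proof: components of $\tilde L$ outside $G\times H/K$ need not preserve the fibers of $\pi$, so it is not clear that they induce maps, let alone a homogeneous polar orbit decomposition, on $G/H$, and the cohomogeneity-bound remark does not substitute for the missing $\pi$-saturation and homogeneity of the descended partition. In short, the connected case is handled correctly and by the same route as the paper; the disconnected refinement remains an acknowledged gap (in the paper as well as in your sketch) and should not be presented as established.
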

In section~\ref{sec:2}, we recall some properties of $(G/K,g_s)$.
The main theorem is proved in section~\ref{sec:3}. Section~\ref{sec:4} is
devoted to proving that $(G/K,g_s)$ is locally irreducible and to computing the identity
component of its isometry group. Finally, we exhibit some examples in
section~\ref{sec:5}.

%%%%%%%%%%%%%%%%%%%%%%%%%%%%%%%%%%%%%%%%%%%%%
\section{Preliminaries}
\label{sec:2}

In this paper, we always refer to the notation in the introduction. Next, we recall some results of Ziller~\cite{Zi}.

Another presentation of $G/K$ is obtained from the transitive action of $\bar G:=G\times H/K$ on $G/K$. Let $\bar K$ be the corresponding isotropy group at the basepoint. If $\bar{\mathfrak g}$ and $\bar{\mathfrak k}$ are the Lie algebras of $\bar G$ and $\bar K$ respectively, then
\begin{align*} 
\bar{\mathfrak{g}}& = \mathfrak{k}\oplus\mathfrak{p}_1\oplus\mathfrak p_2\oplus\mathfrak{h}/\mathfrak{k},\\
\bar{\mathfrak{k}}& = \mathfrak{k}\oplus\{\, (0,X,X+\mathfrak{k})\in \mathfrak{p}_1\oplus\mathfrak p_2\oplus\mathfrak{h}/\mathfrak{k} \mid X\in \mathfrak p_2\, \}.
\end{align*}
As a reductive complement, we can take $\bar{\mathfrak q}=\mathfrak p_1\oplus\{\, (0,s^2X,(s^2-1)X+\mathfrak k)\in \mathfrak p_1\oplus\mathfrak p_2\oplus\mathfrak h/\mathfrak k \mid X\in \mathfrak p_2 \,\}$. 
The isomorphism between $\bar G/\bar K$ and $G/K$ on the Lie algebra level sends $\mathfrak{p_1}$ to $\mathfrak{p_1}$ as $id$ and $(0,s^2X,(s^2-1)X+\mathfrak k)$ to $X$,
so the metric $g_s$ looks as follows on $\bar{\mathfrak q}$:
\begin{align*} 
g_s|_{\mathfrak p_1\times \mathfrak p_1}& = \textrm{as before,} \\ 
g_s(\mathfrak p_1,(0,s^2X,(s^2-1)X+\mathfrak k))& = 0, \\
g_s((0,s^2X,(s^2-1)X),(0,s^2Y,(s^2-1)Y+\mathfrak k))& = s^2g(X,Y). \\ 
\end{align*}

The metric $g_s$ is $G\times H/K$-naturally reductive  with respect to the decomposition $\mathfrak{\bar g}=\mathfrak{\bar k}+
\mathfrak{\bar q}$; see~\cite[Theorem $3$]{Zi}. Moreover, if $G$ is simple, $g_s$ is not $G$-normal homogeneous for $s\neq 1$, and $g_s$ is $G\times H/K$-normal homogeneous iff $s<1$. In particular $g_s$  is nonnegatively curved if $G$ is simple and $s<1$. 
%%%%%%%%%%%%%%%%%%%%%%%%%%%%%%%%%%%%%%%%%%%%%%%%
\section{Proof of the main theorem}
\label{sec:3}
Let $L$ be a closed Lie subgroup of $G$ which acts polarly on $G/H$, and let $\Sigma$ be a section containing $1H$. Let $\mathfrak s:=\mathfrak m+[\mathfrak m,\mathfrak m]$, where $\mathrm T_{1H}\Sigma=\mathfrak m\subset \mathfrak p_1$.
Then $\mathfrak s$ is a Lie subalgebra of $\mathfrak g$.
Let $S$ be the connected Lie subgroup of $G$ with Lie algebra $\mathfrak s$.
Then $\Sigma=S(1H)$ is a symmetric space and homogeneous
under $S$. We prove that $S(1K)$ is a section for the action of $\bar L:=L\times H/K$
on $(G/K,g_s)$. Since
$\pi^{-1}(gH)\subset \bar L(gK)$ and $L(gH)$ meets $S(1H)$,
$\bar L(gK)$ meets $S(1K)$ for all $gK\in G/K$. Recall that $G\times H/K$
preserves the vertical distribution (and then the horizontal distribution, with respect to $\pi$) on $(G/K,g_s)$. The hypothesis
$[\mathfrak m,\mathfrak m]\subset \mathfrak k$ implies that
$\mathrm{T}_{1K}S(1K)=\mathfrak m$, and hence $S(1K)$ is horizontal.
It follows that the $\bar L$-orbits in $G/K$ meet $S(1K)$ always orthogonally since $\pi:(G/K,g_s)\to (G/H,g)$ is a Riemannian submersion.

Conversely, suppose that $\bar L$ is a connected closed
Lie subgroup of  $G\times H/K$ which acts polarly on $(G/K,g_s)$ with a horizontal section $\bar\Sigma$.
\begin{claim}
$1\times H/K(x)\subset \bar L(x)$, for any $\bar L$-regular point $x\in G/K$.
\end{claim}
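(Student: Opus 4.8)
\emph{Proof strategy.} The plan is to first show that the vertical space at $x$ is tangent to the orbit $\bar L(x)$, then to propagate this along the whole orbit, and finally to use a global Frobenius-type argument to conclude that the entire $\pi$-fiber through $x$ lies in $\bar L(x)$. Note first that, for the $\bar G$-action $(g,hK)\cdot g'K=gg'h^{-1}K$, the orbit $1\times H/K(x)$ is exactly the fiber $F_x:=\pi^{-1}(\pi(x))$ through $x$, and $F_x$ is connected because $H$ and $K$ are connected; so the Claim is the assertion $F_x\subset\bar L(x)$.

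Since $x$ is $\bar L$-regular, the horizontal section $\bar\Sigma$ meets the orbit $\bar L(x)$, say at a point $y_0=\ell(x)$ with $\ell\in\bar L$, and $y_0$ is again regular. I would use the standard fact that at a regular point on a section the tangent space to the orbit and the image of the section's differential are orthogonal complements; since $\bar\Sigma$ is horizontal, this forces $\mathcal V_{y_0}\subset\mathrm T_{y_0}\bar L(x)$, where $\mathcal V$ denotes the vertical distribution of $\pi$. Now $\ell^{-1}\in\bar L\subset\bar G$ preserves $\mathcal V$ and carries $\mathrm T_{y_0}\bar L(x)$ onto $\mathrm T_x\bar L(x)$, so $\mathcal V_x\subset\mathrm T_x\bar L(x)$; and since $\bar L$ preserves $\mathcal V$ and every point of the orbit $\bar L(x)$ is regular, the same reasoning gives $\mathcal V_y\subset\mathrm T_y N$ for every $y\in N:=\bar L(x)$.

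Next I would observe that $\bar L$, being closed in the compact group $\bar G=G\times H/K$, is compact, so $N=\bar L(x)$ is a compact embedded submanifold of $G/K$, in particular closed; and by the previous step $\mathcal V$ restricts along $N$ to a subbundle of $TN$. A local argument — a foliated chart for the integrable distribution $\mathcal V$, together with the constant-rank theorem applied to the transverse projection restricted to $N$ — then shows that $N\cap F$ is open in $F$ for every fiber $F$ of $\pi$. Since $N$ is closed, $N\cap F$ is also closed in $F$; as $F_x$ is connected and contains $x\in N$, we conclude $F_x\subset N=\bar L(x)$, which is the Claim. The main obstacle is precisely this last step: pointwise tangency of $\mathcal V$ along $N$ does not by itself force the fibers through points of $N$ to remain in $N$ (a proper horizontal arc inside a horizontal line already violates the naive local statement). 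What makes the open‑and‑closed dichotomy decisive here are the two global inputs, namely compactness of $N$, so that $N\cap F$ is closed, and connectedness of the fibers of $\pi$.
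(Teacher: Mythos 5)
Your argument is correct and takes essentially the same route as the paper: horizontality of the section together with regularity gives $\mathcal V\subset \mathrm T(\bar L\text{-orbit})$ at a point of $\bar\Sigma$, and $\bar L$-invariance of $\mathcal V$ transports this back to $x$ (and along the whole orbit). Your final open-and-closed argument --- $N\cap F$ is open in the fiber $F$ because $\mathcal V$ is everywhere tangent to the compact embedded orbit $N$, and closed because $N$ is closed, so connectedness of $F$ concludes --- is a correct and welcome justification of the step the paper dismisses with ``now it is clear.''
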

\begin{proof}
In fact, let $M_r\subset G/K$ be the set of all the $\bar L$-regular points and let $\mathcal V$ be the vertical distribution on $G/K$. 
Then $\mathcal V$ is integrable with leaves given by the $1\times H/K$-orbits.
Let $x\in M_r$.  Then there exists $l\in\bar L$ such that $lx\in \bar\Sigma$. Since $\bar \Sigma$
is horizontal and $x$ is a regular point, $\mathcal V _{lx} \subset \mathrm{T}_{lx}\bar L(lx)$.
It follows that $\mathcal V_x\subset \mathrm{T}_x{\bar L(x)}$ since $\mathcal V$ is $\bar L$-invariant. Now it is clear that $1\times H/K(x)\subset \bar L(x)$. 
\end{proof}
\begin{claim}
Let $L':=\pi_G(\bar L)\times H/K$. Then the actions of $L'$ and $\bar L$ on $(G/K,g_s)$ are orbit-equivalent.
\end{claim}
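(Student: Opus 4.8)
The plan is to prove the stronger statement that $\bar L$ and $L'$ have exactly the same orbits on $(G/K,g_s)$, so that the identity map is the required orbit equivalence. First observe the trivial inclusion $\bar L\subset L'$: if $(g,hK)\in\bar L$ then $g\in\pi_G(\bar L)$ and hence $(g,hK)\in\pi_G(\bar L)\times H/K=L'$. Consequently $\bar L(x)\subset L'(x)$ for every $x\in G/K$. Note also that $\pi_G(\bar L)$, being the continuous image of the compact group $\bar L$, is a compact subgroup of $G$, so $L'$ is a compact subgroup of $G\times H/K$ and acts properly and isometrically on $(G/K,g_s)$; in particular its orbits are closed.

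For the reverse inclusion on regular points, let $x$ be an $\bar L$-regular point and let $(g,kK)\in L'$. Since $g\in\pi_G(\bar L)$, pick $h\in H$ with $(g,hK)\in\bar L$ and write $(g,kK)=(g,hK)(1,h^{-1}kK)$ in $G\times H/K$. By the preceding Claim, $(1,h^{-1}kK)x$ lies in $(1\times H/K)(x)\subset\bar L(x)$, say $(1,h^{-1}kK)x=lx$ with $l\in\bar L$; then $(g,kK)x=(g,hK)(lx)=\big((g,hK)l\big)x\in\bar L(x)$, since $(g,hK)l\in\bar L$. Hence $L'(x)\subset\bar L(x)$, and together with the easy inclusion this gives $L'(x)=\bar L(x)$ for every $x$ in the open dense set $M_r$ of $\bar L$-regular points.

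It remains to pass from the regular set to all of $G/K$, and this is the delicate part. I would argue through the section $\bar\Sigma$: it meets every $\bar L$-orbit, hence (since $\bar L\subset L'$) every $L'$-orbit, and the $\bar L$-regular points are dense in $\bar\Sigma$. Let $p,q\in\bar\Sigma$ with $q\in L'(p)$, say $q=l'p$ with $l'\in L'$, and choose $\bar L$-regular points $p_n\in\bar\Sigma$ with $p_n\to p$. Then $l'p_n\in L'(p_n)=\bar L(p_n)$ by the previous paragraph, so $l'p_n=l_np_n$ for some $l_n\in\bar L$; by compactness of $\bar L$ we may assume $l_n\to l_0\in\bar L$, whence $l_0p=\lim l_np_n=\lim l'p_n=l'p=q$ and $q\in\bar L(p)$. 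Thus $\bar L(p)\cap\bar\Sigma=L'(p)\cap\bar\Sigma$ for all $p\in\bar\Sigma$, and since every orbit of either group is the corresponding saturation of its (nonempty) intersection with $\bar\Sigma$, the standard orbit--section dictionary for polar actions yields $\bar L(x)=L'(x)$ for all $x\in G/K$.

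The main obstacle is precisely this last upgrade. One should resist the temptation to promote the Claim to ``$1\times H/K\subset\bar L$'' — which would give $\bar L=L'$ outright — because the inclusion $(1\times H/K)(x)\subset\bar L(x)$ only forces $(1,kK)\in\bar L\cdot(G\times H/K)_x$, and the ambient isotropy $(G\times H/K)_x$ can be strictly larger than $\bar L_x$; so the whole argument must stay at the level of orbits, and it is the compactness of $\bar L$ together with the fact that $\bar\Sigma$ meets every orbit that make the limiting step work at non-regular points. Alternatively, one can first check that $\bar\Sigma$ is a section for $L'$ as well: on the Lie algebra level $\mathfrak l'=\bar{\mathfrak l}+(0\oplus\mathfrak h/\mathfrak k)$, so $\mathrm T_pL'(p)=\mathrm T_p\bar L(p)+\mathcal V_p$ at each $p\in\bar\Sigma$, and both summands are orthogonal to $\mathrm T_p\bar\Sigma$ because $\bar\Sigma$ is horizontal; one then compares the two polar actions, which have the same cohomogeneity $\dim\bar\Sigma$, and uses connectedness of $\bar L$ and $L'$ to conclude the regular orbits — hence all orbits — coincide.
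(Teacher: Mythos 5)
Your proof is correct and follows the paper's argument: the trivial inclusion $\bar L(x)\subset L'(x)$, then the previous Claim giving $L'(x)\subset\bar L(x)$ at $\bar L$-regular points (your factorization $(g,kK)=(g,hK)(1,h^{-1}kK)$ is exactly the implicit step), then an upgrade from the dense regular set to all points. The only difference is that where the paper delegates this last upgrade to a cited lemma of Gorodski--Thorbergsson, you prove it directly by compactness of $\bar L$ and a limiting argument -- which is fine, though your detour through $\bar\Sigma$ is unnecessary: approximating an arbitrary $p\in G/K$ by $\bar L$-regular points $p_n\to p$ and passing to a convergent subsequence of the $l_n\in\bar L$ with $l'p_n=l_np_n$ already gives $L'(p)\subset\bar L(p)$ everywhere, without invoking density of regular points in the section or the orbit--section dictionary.
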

\begin{proof}
In fact, we have that $\bar L(x)\subset L'(x)$ for all $x\in G/K$ since $\bar L\subset L'$. Conversely,
the previous claim implies that $L'(x)\subset \bar L(x)$ for all $\bar L$-regular points $x\in G/K$.
This already implies that the orbits of $\bar L$ and $L'$ in $G/K$ coincide (see e.g.~\cite[Lemma~3.6]{GoTh} for the linear case). 
\end{proof}

Now it is clear that the action of $\pi_G(\bar L)$ on $G/H$ is polar with a section
$\pi(\bar\Sigma)$. This finishes the proof of Theorem~\ref{th:main}.
%%%%%%%%%%%%%%%%%%%%%%%%%%%%%%%%%%%%%%%%%%%%%%%%%%%%%%%%%%%%%%%%%%%%%%%%%%%%%%%%
\section{The irreducibility of $(G/K,g_s)$ and its isometry group}
\label{sec:4}

The purpose of this section is to study the irreducibility of $(G/K,g_s)$ and to
compute the identity component of its isometry group. Some related results can be found in \cite{AZ}.
 \begin{proposition}
\label{pro:irr}
If $G$ is a simple Lie group, then $(G/K,g_s)$ is locally irreducible.
\end{proposition}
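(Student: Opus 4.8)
The plan is to show directly that $(G/K,g_s)$ admits no parallel distribution other than $0$ and the full tangent bundle, so that the universal cover has no nontrivial de~Rham splitting. (A Euclidean de~Rham factor is excluded at once: $G$ is compact and preserves the de~Rham factors, but a compact group cannot act transitively on a Euclidean space of positive dimension.) By Ziller's result recalled in Section~\ref{sec:2}, $(G/K,g_s)=\bar G/\bar K$ is naturally reductive for $\bar{\mathfrak g}=\bar{\mathfrak k}+\bar{\mathfrak q}$, so its Levi--Civita connection is the canonical connection of this decomposition plus half of the skew torsion $X\mapsto [X,\,\cdot\,]_{\bar{\mathfrak q}}$. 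Consequently a $\bar G$-invariant distribution on $G/K$, that is, an $\mathrm{Ad}(\bar K)$-invariant subspace $V\subseteq\bar{\mathfrak q}$, is parallel for the Levi--Civita connection precisely when $[\bar{\mathfrak q},V]_{\bar{\mathfrak q}}\subseteq V$; so it suffices to prove that the only such $V$ are $0$ and $\bar{\mathfrak q}$.

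The one computation I would carry out is to evaluate $[\,\cdot\,,\cdot\,]_{\bar{\mathfrak q}}$ from the formulas for $\bar{\mathfrak k}$ and $\bar{\mathfrak q}$ in Section~\ref{sec:2}, using $[\mathfrak p_1,\mathfrak p_1]\subseteq\mathfrak h=\mathfrak k\oplus\mathfrak p_2$, $[\mathfrak p_1,\mathfrak p_2]\subseteq\mathfrak p_1$, $[\mathfrak p_2,\mathfrak p_2]\subseteq\mathfrak p_2$ and $[\mathfrak k,\mathfrak p_2]=0$. Writing $\bar{\mathfrak q}=\mathfrak p_1\oplus\mathfrak r$ with $\mathfrak r=\{(0,s^2X,(s^2-1)X+\mathfrak k):X\in\mathfrak p_2\}$, identified with $\mathfrak p_2$ in the obvious way, one gets: $[\mathfrak p_1,\mathfrak p_1]_{\bar{\mathfrak q}}\subseteq\mathfrak r$, with $[Y_1,Y_2]_{\bar{\mathfrak q}}$ corresponding to $[Y_1,Y_2]_{\mathfrak p_2}\in\mathfrak p_2$; $[\mathfrak p_1,\mathfrak r]_{\bar{\mathfrak q}}\subseteq\mathfrak p_1$, with $[Y,X]_{\bar{\mathfrak q}}=s^2[Y,X]_{\mathfrak g}$ for $Y\in\mathfrak p_1$, $X\in\mathfrak p_2\cong\mathfrak r$; and $[\mathfrak r,\mathfrak r]_{\bar{\mathfrak q}}\subseteq\mathfrak r$. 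I would also use three facts coming from $G$ being simple, i.e.\ from $G/H$ being an irreducible symmetric space: (1)~$\mathrm{Ad}(\bar K)$ acts on $\mathfrak p_1$ as the isotropy representation of $G/H$, hence irreducibly, and on $\mathfrak r\cong\mathfrak p_2$ through the adjoint action of $\mathfrak h$; (2)~$\mathfrak p_1+[\mathfrak p_1,\mathfrak p_1]$ is a nonzero ideal of $\mathfrak g$, so $[\mathfrak p_1,\mathfrak p_1]=\mathfrak h$ and in particular its $\mathfrak p_2$-component is all of $\mathfrak p_2$; (3)~an element of $\mathfrak g$ centralizing $\mathfrak p_1$ centralizes $\mathfrak g$, hence is zero.

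Now take $V\subseteq\bar{\mathfrak q}$ with $[\bar{\mathfrak q},V]_{\bar{\mathfrak q}}\subseteq V$ and $0\neq V\neq\bar{\mathfrak q}$, aiming for a contradiction. Since $\mathfrak p_1$ is $\mathrm{Ad}(\bar K)$-irreducible, $V\cap\mathfrak p_1$ is $0$ or $\mathfrak p_1$. If $\mathfrak p_1\subseteq V$ then $V\supseteq[\mathfrak p_1,\mathfrak p_1]_{\bar{\mathfrak q}}=\mathfrak r$ by~(2), so $V=\bar{\mathfrak q}$, a contradiction; hence $V\cap\mathfrak p_1=0$. If some $X\in\mathfrak p_2$ lies in $V$ (under $\mathfrak p_2\cong\mathfrak r$), then $[\mathfrak p_1,X]_{\bar{\mathfrak q}}=s^2[\mathfrak p_1,X]_{\mathfrak g}\subseteq V\cap\mathfrak p_1=0$, so $X$ centralizes $\mathfrak p_1$ and $X=0$ by~(3); thus $V\cap\mathfrak r=0$ too. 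So the projections $\bar{\mathfrak q}\to\mathfrak p_1$ and $\bar{\mathfrak q}\to\mathfrak r$ both restrict to injections on $V$; as the image of $V$ in $\mathfrak p_1$ is $\mathrm{Ad}(\bar K)$-invariant and nonzero, it is all of $\mathfrak p_1$, and $V$ is the graph of a nonzero $\mathrm{Ad}(\bar K)$-equivariant --- hence $\mathrm{ad}(\mathfrak h)$-equivariant --- map $\phi\colon\mathfrak p_1\to\mathfrak p_2$. This graph case is where the bracket relations are really needed and is the main obstacle. Imposing $[\bar{\mathfrak q},V]_{\bar{\mathfrak q}}\subseteq V$ on $V=\{\,Y+\phi(Y):Y\in\mathfrak p_1\,\}$ and simplifying with the equivariance of $\phi$ yields $[Y_1,Y_2]_{\mathfrak p_2}=s^2[\phi(Y_1),\phi(Y_2)]_{\mathfrak p_2}$ for all $Y_1,Y_2\in\mathfrak p_1$. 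Since $\phi(\mathfrak p_1)$ is an $\mathrm{ad}(\mathfrak h)$-submodule of $\mathfrak p_2$, hence an ideal of $\mathfrak p_2$ and in particular closed under brackets, the left-hand side, which spans $[\mathfrak p_1,\mathfrak p_1]_{\mathfrak p_2}=\mathfrak p_2$, must lie in $\phi(\mathfrak p_1)$, so $\phi$ is onto; as $\ker\phi$ is an $\mathrm{Ad}(\bar K)$-submodule of the irreducible $\mathfrak p_1$ and $\phi\neq0$, $\phi$ is a linear isomorphism $\mathfrak p_1\to\mathfrak p_2$. Then, $\phi$ being $\mathfrak k$-equivariant and $\mathfrak k$ acting trivially on $\mathfrak p_2$, $\mathfrak k$ acts trivially on $\mathfrak p_1$, so $\mathfrak k$ is an ideal of $\mathfrak g$ and, $G$ being simple and $\mathfrak p_1\neq0$, $\mathfrak k=0$. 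But now $\mathfrak h=\mathfrak p_2$, so $[\mathfrak p_1,\mathfrak p_1]\subseteq\mathfrak p_2$ and $[Y_1,Y_2]_{\mathfrak g}=[Y_1,Y_2]_{\mathfrak p_2}$, and $\xi+Y\mapsto\xi+s\phi(Y)$ ($\xi\in\mathfrak h$, $Y\in\mathfrak p_1$) is a Lie algebra homomorphism $\mathfrak g\to\mathfrak h$ which is surjective with nonzero kernel $\{\,Y-s\phi(Y):Y\in\mathfrak p_1\,\}$ --- impossible, since $\mathfrak g$ is simple. This contradiction shows $V\in\{0,\bar{\mathfrak q}\}$, and therefore $(G/K,g_s)$ is locally irreducible.
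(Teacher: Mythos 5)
Your proof is correct, but it takes a route that differs from the paper's in two places worth noting. First, the paper does not argue via parallel distributions at all: it invokes the theorem (Kobayashi--Nomizu, Theorem 4.7) that for a compact naturally reductive space the Levi--Civita holonomy algebra is \emph{generated} by the operators $\mathrm{ad}(\bar{\mathfrak k})$ and $\Lambda_{\bar{\mathfrak q}}(X)=\tfrac12[X,\cdot]_{\bar{\mathfrak q}}$, and then classifies the subspaces invariant under these; you instead use only the elementary comparison $\nabla^{LC}=\nabla^c+\tfrac12[\cdot,\cdot]_{\bar{\mathfrak q}}$ together with the de~Rham decomposition. That is more self-contained (you need only the inclusion of the generators in the holonomy, not the equality), but the one step you should make explicit is the passage from local reducibility to a $\bar G$-invariant parallel distribution: you must lift to the (compact, simply connected) universal cover, observe that it is naturally reductive with the same data $\bar{\mathfrak g}=\bar{\mathfrak k}+\bar{\mathfrak q}$, and use uniqueness of the de~Rham decomposition plus connectedness of the lifted transitive group to conclude that each factor distribution is invariant under it; your parenthetical remark that $G$ ``preserves the de~Rham factors'' is exactly this fact, so the gap is one of exposition rather than substance (and invariance under the identity component of the isotropy, i.e.\ under $\mathrm{ad}(\bar{\mathfrak k})$, is all your later argument uses). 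Second, your endgame in the graph case is genuinely different and arguably cleaner: the paper shows $\mathfrak k=0$, uses simplicity of $\mathfrak p_2$ and its equations (1), (3) to force $s=1$, and then disposes of $s=1$ by bi-invariance of $g_1$ on $G$; you instead extract the relation $[Y_1,Y_2]_{\mathfrak p_2}=s^2[\phi(Y_1),\phi(Y_2)]$ and build from it a surjective Lie algebra homomorphism $\mathfrak g\to\mathfrak h$, $\xi+Y\mapsto \xi+s\phi(Y)$, with nonzero proper kernel, contradicting simplicity uniformly in $s$ and without needing $\mathfrak p_2$ simple or the $s=1$ case split. Both the bracket computations and the equivariance manipulations in your argument check out against the paper's formulas (2) and (3), so the proof stands as a valid alternative.
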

\begin{proof}
Let $\mathfrak{hol}(\bar G/\bar K,g_s)$ be the Lie algebra of the holonomy group of $(\bar G/\bar K,g_s)$. Since $(\bar G/\bar K,g_s)$ is a compact naturally reductive space, $\mathfrak{hol}(\bar G/\bar K,g_s)$ is the Lie subalgebra of $\mathfrak{so}(\bar{\mathfrak q})$ generated by
$\{\, \Lambda(X)\in \mathfrak{so}(\bar{\mathfrak q}) \mid X\in \bar{\mathfrak  g} \,\}$, where
$$\Lambda(X)Y=    
\begin{cases}
\mathrm{ad}(X)Y& \text{ if $X\in\bar{\mathfrak k}$,}\\
\Lambda_{\bar{\mathfrak q}}(X)Y=\frac{1}{2}[X,Y]_{\bar{\mathfrak q}}&\text{ if $X\in \bar{\mathfrak q}$;} 
\end{cases} $$
see~\cite[Theorem 4.7, p. 208]{KN}. Let $X_0\in\mathfrak k$, and $X_i$,  $Y_i\in\mathfrak p_i$, for $i=1,2$. Let $X=X_0+(0, X_2,X_2+\mathfrak k)$, $Y=Y_1+(0, s^2Y_2, (s^2-1)Y_2+\mathfrak k)$, and $Z=(0, s^2X_2,\linebreak (s^2-1)X_2+\mathfrak k)$. Then
 \begin{align}
\label{eq:1} \mathrm{ad}(X)Y& = [X_0,Y_1]+[X_2,Y_1]+(0,s^2[X_2,Y_2],(s^2-1)[X_2,Y_2]+\mathfrak k),\\
[X_1,Y]_{\bar{\mathfrak q}} &= s^2[X_1,Y_2]+(0, s^2[X_1,Y_1]_{\mathfrak p_2}, (s^2-1)[X_1,Y_1]_{\mathfrak p_2}+\mathfrak k),\label{eq:2}\\
[Z,Y]_{\bar{\mathfrak q}} &= s^2[X_2,Y_1]  \label{eq:3}\\
&\qquad +(0,s^2(2s^2-1)[X_2,Y_2],(s^2-1)(2s^2-1)[X_2,Y_2]+\mathfrak k). \nonumber
\end{align}

Now let $V$ be a $\Lambda(\bar{\mathfrak g})$-invariant subspace of $\bar{\mathfrak q}$.
It is sufficient to prove that $V=\bar{\mathfrak q}$ or  $0$. Clearly $V\cap\mathfrak p_1$
and the projection $V_{\mathfrak p_1}$ of $V$ on $\mathfrak p_1$ are
$\mathrm{ad}(\mathfrak h)$-invariant since $V$ is $\Lambda(\bar{\mathfrak k})$-invariant.
It is also clear that $G/H$ is locally irreducible since $G$ is simple. Then we have
one of the following possibilities: $V\cap\mathfrak p_1=\mathfrak p_1$ or $V\cap\mathfrak p_1=0$ and $V_{\mathfrak p_1}=0$ or $V\cap\mathfrak p_1=0$ and  $V_{\mathfrak p_1}=\mathfrak p_1$.
The first possibility would imply via \eqref{eq:2} and $[\mathfrak p_1,\mathfrak p_1]=\mathfrak h$
that $V=\bar{\mathfrak q}$. The second possibility would imply that $\{\,X_2\in\mathfrak p_2 \mid (0,s^2X_2,(s^2-1)X_2+\mathfrak k)\in V \,\}$ is an ideal of $\mathfrak g$, and so $V=0$.
Finally we consider the last possibility. Here we have that $\mathfrak k$ is an ideal of $\mathfrak g$, and hence $\mathfrak k=0$. We also have that  $\mathrm{dim} \mathfrak p_1\leq \mathrm{dim} V\leq \mathrm{dim} \mathfrak p_2$.  Since $V^\perp$ is also $\Lambda(\bar{\mathfrak g})$-invariant, $\mathrm{dim} \mathfrak p_1\leq \mathrm{dim} V^\perp\leq \mathrm{dim} \mathfrak p_2$. This implies that $\mathrm{dim} V=\mathrm{dim} \mathfrak p_1= \mathrm{dim} \mathfrak p_2$. Therefore we can write $V=\{\,X_1+(0,s^2\varphi(X_1),(s^2-1)\varphi(X_1)+\mathfrak k) \mid X_1\in\mathfrak p_1\,\}$, where $\varphi:\mathfrak p_1\to \mathfrak p_2$ is an $\mathrm{ad}(\mathfrak p_2)$-invariant isomorphism. This implies that $\mathfrak p_2$ is simple. Hence $\mathfrak p_2=[\mathfrak p_2,\mathfrak p_2]$, and from \eqref{eq:1} and \eqref{eq:3} we have that $\frac{2s^2-1}{s^2}\varphi(X_1)=\varphi(X_1)$, for all $X_1\in\mathfrak p_1$. Therefore $s=1$. It follows that $V$ cannot exist since $g_1$ is a bi-invariant metric on $G$.
\end{proof}
%%%%%%%%%%%%%%%%%%%%%%%%%%%%%%%%%%%%%%%%%%%%
Let $M=G'/K'$ be a $G'$-naturally reductive space and let $\nabla^c$ be the associated canonical
connection. Let $\mathrm{I}_0(M)$ (respectively, $\mathrm{Aff}_0(\nabla^c)$) denote
the identity component of the isometry group (respectively, group of $\nabla^c$-affine transformations) of $M$. The following result \cite[p. $22$]{OlRe} can be used to compute
$\mathrm{I}_0(M)$.
\begin{proposition}
\label{pro:re}
Let $M$ be a compact locally irreducible naturally reductive space. If $M$ is neither (globally) isometric to a round sphere, nor to a real projective space, then 
$$\mathrm{I}_0(M)=\mathrm{Aff}_0(\nabla^c)=\mathrm{Tr}(\nabla^c)\hat K,$$
where $\mathrm{Tr}(\nabla^c)$ denotes the group of transvections of the associated canonical connection, and $\hat K$ denotes the connected Lie subgroup of $\mathrm{Aff}_0(\nabla^c)$ whose Lie algebra consists of $\mathrm{Tr}(\nabla^c)$-invariant fields whose associated flows are $\nabla^c$-affine. Moreover, $\mathrm{Tr}(\nabla^c)$ commutes with $\hat K$.
\end{proposition}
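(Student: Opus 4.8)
The plan is to prove the two displayed equalities separately. For the first, $\mathrm{I}_0(M)=\mathrm{Aff}_0(\nabla^c)$, I would establish the two inclusions independently; the inclusion $\mathrm{Aff}_0(\nabla^c)\subseteq\mathrm{I}_0(M)$ is comparatively soft, whereas $\mathrm{I}_0(M)\subseteq\mathrm{Aff}_0(\nabla^c)$ --- equivalently, the assertion that the canonical connection of a compact locally irreducible naturally reductive space is intrinsically determined by $(M,g)$ away from spheres and real projective spaces --- is the crux of the whole statement. Granting that, the decomposition $\mathrm{Aff}_0(\nabla^c)=\mathrm{Tr}(\nabla^c)\hat K$ together with the commutativity is obtained from a formal analysis of the Lie algebra of $\nabla^c$-Killing fields, valid for any complete metric connection with parallel torsion and curvature.

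I would first record the structure of $\nabla^c$ that is needed: it is a metric connection whose torsion $T^c$ is totally skew-symmetric, both $T^c$ and $R^c$ are $\nabla^c$-parallel, and $\nabla-\nabla^c=\tfrac12 T^c$ is a globally defined tensor, where $\nabla$ denotes the Levi-Civita connection. For the hard inclusion, take $f\in\mathrm{I}_0(M)$; then $f^{*}\nabla^c$ is again a metric connection with parallel totally skew torsion, hence is the canonical connection of a second naturally reductive presentation of $(M,g)$, so it suffices to prove uniqueness of the canonical connection. This is where the skew-torsion holonomy theorem of Olmos--Reggiani~\cite{OlRe} enters: the difference of two candidate canonical connections is a totally skew-symmetric tensor with values in (essentially) the holonomy algebra, and the theorem forces such a difference to vanish unless that holonomy representation acts transitively on the unit sphere --- which, after a short additional argument, pins $M$ down to a round sphere or a real projective space. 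Excluding those, $f^{*}\nabla^c=\nabla^c$, and since $f$ lies in the identity component, $f\in\mathrm{Aff}_0(\nabla^c)$. I expect this uniqueness step to be the main obstacle: it rests on the deep skew-torsion holonomy classification, and it is precisely where the two exceptional families appear.

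For $\mathrm{Aff}_0(\nabla^c)\subseteq\mathrm{I}_0(M)$, I would note that if $f$ is $\nabla^c$-affine then $f^{*}g$ is another $\nabla^c$-parallel metric, hence, by local irreducibility of $(M,g)$, equals $c(f)\,g$ for a constant $c(f)>0$. The resulting continuous homomorphism $c\colon\mathrm{Aff}_0(\nabla^c)\to\R_{>0}$ is trivial: on each one-parameter subgroup the generating field is homothetic, and a homothetic field on the compact manifold $M$ is Killing, so $c\equiv 1$ and $f$ is an isometry.

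Finally, for $\mathrm{Aff}_0(\nabla^c)=\mathrm{Tr}(\nabla^c)\hat K$: the transvection group $\mathrm{Tr}(\nabla^c)$ is a closed normal subgroup of $\mathrm{Aff}_0(\nabla^c)$ acting transitively on $M$, with isotropy algebra at a base point $p$ spanned by the curvature endomorphisms $R^c(x,y)$. Given a $\nabla^c$-Killing field $X$, I would subtract a transvection field agreeing with $X$ at $p$; the difference vanishes at $p$, and after correcting by an element of the isotropy --- which again lies in the transvection algebra --- one may arrange that its $\nabla^c$-covariant derivative at $p$ centralizes the holonomy algebra. Using that an affine Killing field is determined by its $1$-jet at $p$ and that $\mathrm{Tr}(\nabla^c)$ is transitive, such a field is $\mathrm{Tr}(\nabla^c)$-invariant and has $\nabla^c$-affine flow, hence lies in the Lie algebra of $\hat K$; this gives the splitting of $\mathfrak{aff}(\nabla^c)$ and therefore the product decomposition of the groups. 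The commutativity of $\mathrm{Tr}(\nabla^c)$ with $\hat K$ is then immediate, since $\mathrm{Tr}(\nabla^c)$-invariance of the fields spanning the Lie algebra of $\hat K$ means exactly that their flows commute with $\mathrm{Tr}(\nabla^c)$.
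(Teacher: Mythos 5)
The first thing to note is that the paper does not prove Proposition~\ref{pro:re} at all: it is quoted from Olmos--Reggiani \cite{OlRe}, so your sketch is in effect an attempt to reconstruct their argument, and the comparison has to be with that. The central gap is in your ``hard'' inclusion $\mathrm{I}_0(M)\subseteq\mathrm{Aff}_0(\nabla^c)$. You reduce it to uniqueness of the canonical connection and assert that the skew-torsion holonomy theorem forces the difference of two candidate canonical connections to vanish unless the holonomy acts transitively on the unit sphere, ``which pins $M$ down to a round sphere or a real projective space.'' That is not what the theorem says: in the irreducible, non-transitive case its conclusion is that the skew-torsion holonomy system is \emph{symmetric}, i.e.\ the holonomy acts as the adjoint representation of a compact simple Lie algebra and the difference tensor is a multiple of its Lie bracket --- it need not vanish. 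Accordingly, uniqueness of the canonical connection is simply false under the hypotheses of the proposition: a compact simple Lie group with a bi-invariant metric (say $SU(3)$, which is neither a sphere nor a real projective space) is compact, locally irreducible and naturally reductive, yet admits several canonical connections (the flat $\pm$-connections coming from the presentation $G=G/\{e\}$ and the Levi-Civita connection coming from the symmetric presentation $(G\times G)/\Delta G$). In \cite{OlRe} the bi-invariant Lie groups are precisely the third exceptional family in the uniqueness theorem, and for them the equality $\mathrm{I}_0=\mathrm{Aff}_0(\nabla^c)$ is obtained by a separate, classical argument ($\mathrm{I}_0(G)=L(G)R(G)$ for $G$ compact simple), not via uniqueness. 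Your dichotomy silently drops this case, so the proof as proposed does not go through.

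There is a second, smaller gap in the ``soft'' inclusion: you argue that $f^{*}g$ is $\nabla^c$-parallel and hence equal to $c(f)g$ ``by local irreducibility of $(M,g)$.'' Local irreducibility is a statement about the Riemannian (Levi-Civita) holonomy, whereas parallelism of $f^{*}g$ is with respect to $\nabla^c$, whose holonomy is in general much smaller --- it is trivial for the flat $(-)$-connection of a Lie group, where every left-invariant symmetric $2$-tensor is $\nabla^c$-parallel. So $\nabla^c$-parallel metrics need not be proportional to $g$, and this step requires a different justification; in \cite{OlRe} (and in Reggiani's related work) the cleaner route is to establish $\mathrm{Aff}_0(\nabla^c)=\mathrm{Tr}(\nabla^c)\hat K$ first and then verify that both factors act by isometries, using compactness. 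Your final part (the decomposition of $\mathfrak{aff}(\nabla^c)$ and the commutativity of $\mathrm{Tr}(\nabla^c)$ with $\hat K$) is essentially the right formal argument, but as it stands it rests on the two steps above, which are exactly where the real content --- and the exceptional families --- lie.
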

%%%%%%%%%%%%%%%%%%%%%%%%%%%%%%%%%%%%%%%%%%%
Next we will use Propositions \ref{pro:irr} and \ref{pro:re} to compute $\mathrm{I}_0(G/K,g_s)$.
\begin{lemma}
\label{lem:tr}
Let $\mathrm{Tr}(\nabla^c)$ be the group of transvections of the associated canonical connection to
$(G/K,g_s)$. If $G$ is simple, then $\mathrm{Tr}(\nabla^c)$ is equal to $\bar G$ for all $s\neq 1$, and equal to $G$ for $s=1$.
\end{lemma}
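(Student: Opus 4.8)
The statement to prove is Lemma~\ref{lem:tr}: the transvection group $\mathrm{Tr}(\nabla^c)$ of the canonical connection associated with $(G/K,g_s)$ equals $\bar G = G\times H/K$ when $s\neq 1$, and equals $G$ when $s=1$. Recall that for a naturally reductive presentation $M=G'/K'$ with reductive decomposition $\mathfrak g'=\mathfrak k'+\mathfrak q'$, the transvection algebra is the subalgebra of $\mathfrak g'$ generated by $\mathfrak q'$ together with all brackets $[\mathfrak q',\mathfrak q']$; equivalently it is the ideal generated by $\mathfrak q'$ inside the ``holonomy + $\mathfrak q'$'' algebra, and the transvection group is the corresponding connected subgroup of $\mathrm{Aff}_0(\nabla^c)$. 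So the plan is: (i) identify $\mathrm{Tr}(\nabla^c)$ abstractly as the connected subgroup of $\bar G$ (which acts by $\nabla^c$-affine transformations, since $\nabla^c$ is precisely the $\bar G$-invariant canonical connection of the naturally reductive presentation $\bar G/\bar K$) whose Lie algebra is the subalgebra $\mathfrak{tr}\subset\bar{\mathfrak g}$ generated by $\bar{\mathfrak q}+[\bar{\mathfrak q},\bar{\mathfrak q}]$; (ii) compute $\mathfrak{tr}$ explicitly using the bracket formulas already available, distinguishing the cases $s\neq 1$ and $s=1$; (iii) conclude at the group level.

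For step~(ii), I would work inside $\bar{\mathfrak g}=\mathfrak k\oplus\mathfrak p_1\oplus\mathfrak p_2\oplus\mathfrak h/\mathfrak k$ with $\bar{\mathfrak q}=\mathfrak p_1\oplus\{(0,s^2X,(s^2-1)X+\mathfrak k):X\in\mathfrak p_2\}$. Brackets of elements of $\bar{\mathfrak q}$ are governed by formulas of the same shape as \eqref{eq:2} and \eqref{eq:3} in the proof of Proposition~\ref{pro:irr} (with the roles suitably read off). The key observations: $[\mathfrak p_1,\mathfrak p_1]=\mathfrak h=\mathfrak k\oplus\mathfrak p_2$ because $G/H$ is a symmetric space, so already $\mathfrak p_1+[\mathfrak p_1,\mathfrak p_1]$ contains all of $\mathfrak k$ and $\mathfrak p_2$; hence $\mathfrak g=\mathfrak k\oplus\mathfrak p_1\oplus\mathfrak p_2\subset\mathfrak{tr}$ always. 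The only remaining question is whether the ``extra'' $\mathfrak h/\mathfrak k$ summand lies in $\mathfrak{tr}$, i.e.\ whether $\mathfrak{tr}=\bar{\mathfrak g}$ or $\mathfrak{tr}=\mathfrak g$ (note $\dim\bar{\mathfrak g}-\dim\mathfrak g=\dim(\mathfrak h/\mathfrak k)=\dim\mathfrak p_2$). When $s\neq 1$, the vector $(0,s^2X,(s^2-1)X+\mathfrak k)\in\bar{\mathfrak q}$ has nonzero $\mathfrak h/\mathfrak k$-component $(s^2-1)X+\mathfrak k$ for $X\neq 0$, and once $\mathfrak p_2\subset\mathfrak{tr}$ we can subtract the $\mathfrak p_2$-part to see that $\mathfrak h/\mathfrak k\subset\mathfrak{tr}$; thus $\mathfrak{tr}=\bar{\mathfrak g}$. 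When $s=1$, one has $\bar{\mathfrak q}=\mathfrak p_1\oplus\mathfrak p_2=\mathfrak g$ exactly (the $\mathfrak h/\mathfrak k$-component vanishes identically), so $\mathfrak{tr}\subset\mathfrak g$, and combined with $\mathfrak g\subset\mathfrak{tr}$ we get $\mathfrak{tr}=\mathfrak g$ — reflecting that $g_1$ is just a bi-invariant metric on $G$ with its canonical $(-)$-connection, whose transvection group is $G$ itself.

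For step~(iii), passing from Lie algebras to groups: the transvection group is by definition the connected subgroup of $\mathrm{Aff}_0(\nabla^c)$ generated by the transvections, which is the connected subgroup with Lie algebra $\mathfrak{tr}$; since $\bar G$ (resp.\ $G$ for $s=1$) is closed in $\mathrm{Aff}_0(\nabla^c)$ and acts almost effectively, we identify $\mathrm{Tr}(\nabla^c)$ with $\bar G$ (resp.\ $G$) directly. One should note that $G$ here means its image in $\bar G$ under $g\mapsto(g,K)$, consistent with the ``natural lifting'' conventions of the introduction, and that ``$\mathrm{Tr}(\nabla^c)=G$ for $s=1$'' is understood modulo the (finite, hence irrelevant for the identity component) issue of almost-effectiveness.

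\medskip

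\textbf{Main obstacle.} The main technical point is step~(ii): one must be careful that the relevant bracket identities are exactly those implicit in \eqref{eq:1}--\eqref{eq:3}, and in particular that no lower-dimensional ``accidental'' subalgebra can occur — this is where simplicity of $G$ (hence of $\mathfrak g$, giving $[\mathfrak p_1,\mathfrak p_1]=\mathfrak h$ with no proper ideal to worry about) is essential, just as in Proposition~\ref{pro:irr}. A secondary subtlety is to state precisely what ``$\mathrm{Tr}(\nabla^c)=G$'' means when $s=1$: here $\nabla^c$ is the canonical connection of the symmetric presentation and $G/K$ need not be simply connected, so the claim is about connected groups of affine transformations and the identification is up to covering; this should be flagged rather than belabored.
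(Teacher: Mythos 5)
Your proposal is correct and takes essentially the same route as the paper: the paper's one-line proof likewise rests on the fact that the transvection algebra is $\mathfrak{tr}(\nabla^c)=\bar{\mathfrak q}+[\bar{\mathfrak q},\bar{\mathfrak q}]$, and your explicit computation (using $[\mathfrak p_1,\mathfrak p_1]=\mathfrak h$, which needs simplicity of $\mathfrak g$, and then extracting the $\mathfrak h/\mathfrak k$-component of $(0,s^2X,(s^2-1)X+\mathfrak k)$ when $s\neq 1$, versus $\bar{\mathfrak q}\subset\mathfrak g$ when $s=1$) is exactly what the paper leaves implicit. Only your side remark describing the $s=1$ case as the bi-invariant metric on $G$ with a symmetric presentation is imprecise ($g_1$ is the normal metric on $G/K$, and $\mathfrak g=\mathfrak k+\mathfrak q$ need not be a symmetric decomposition), but this plays no role in the argument.
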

\begin{proof}
This follows from the fact that the Lie algebra of $\mathrm{Tr}(\nabla^c)$ is given by  
$\mathfrak{tr}(\nabla^c)=\bar{\mathfrak q}+[\bar{\mathfrak q}, \bar{\mathfrak q}]$.
\end{proof}
%%%%%%%%%%%%%%%%%%%%%%%%%%%%%%%%%%%%%%%%%
\begin{proposition}
\label{pro:iso1}
Let $G$ be a simple Lie group. Assume that $(G/K, g_s)$ is neither 
isometric to a round sphere, nor to a real projective space.  If
$s^2\neq 1$, then $G\times H/K$ (almost direct product) is the identity component
of the isometry group of $(G/K,g_s)$.  
\end{proposition}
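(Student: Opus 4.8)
The plan is to chain together Propositions~\ref{pro:irr} and~\ref{pro:re} with Lemma~\ref{lem:tr}. Write $M=(G/K,g_s)$ and let $\hat{\mathfrak k}$, $\mathfrak{tr}(\nabla^c)$ be the Lie algebras of $\hat K$ and $\mathrm{Tr}(\nabla^c)$. By Proposition~\ref{pro:irr}, $M$ is compact and locally irreducible; since by hypothesis it is neither a round sphere nor a real projective space, Proposition~\ref{pro:re} gives $\mathrm{I}_0(M)=\mathrm{Tr}(\nabla^c)\,\hat K$ with the two factors commuting, so that the Lie algebra of $\mathrm{I}_0(M)$ equals $\mathfrak{tr}(\nabla^c)+\hat{\mathfrak k}$. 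As $s^2\neq1$, Lemma~\ref{lem:tr} identifies $\mathrm{Tr}(\nabla^c)$ with $\bar G=G\times H/K$, which acts transitively on $M$ with isotropy group $\bar K$ at the basepoint $\bar o$. Thus the whole statement reduces to the inclusion $\hat{\mathfrak k}\subseteq\bar{\mathfrak g}$: granting it, the Lie algebra of $\mathrm I_0(M)$ is $\bar{\mathfrak g}$, and since $\bar G$ acts almost effectively, $\mathrm I_0(M)$ is the image of $\bar G$, i.e.\ $G\times H/K$ as an almost direct product.

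To prove $\hat{\mathfrak k}\subseteq\bar{\mathfrak g}$, note that any $\xi\in\hat{\mathfrak k}$ is, by the description of $\hat K$ in Proposition~\ref{pro:re}, a $\mathrm{Tr}(\nabla^c)$-invariant Killing field; since $\mathrm{Tr}(\nabla^c)=\bar G$ is transitive, $\xi$ is determined by its value $\xi_{\bar o}$, and this value is fixed by the isotropy $\bar K$, hence lies in the space $(\bar{\mathfrak q})^{\bar K}$ of $\mathrm{Ad}(\bar K)$-fixed vectors of the reductive complement $\bar{\mathfrak q}\cong\mathrm T_{\bar o}M$ from Section~\ref{sec:2}. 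I would compute this space from the brackets \eqref{eq:1}--\eqref{eq:3}: as a $\bar K$-module, $\bar{\mathfrak q}=\mathfrak p_1\oplus\mathfrak q_2$, where $\mathfrak q_2=\{\,(0,s^2Y,(s^2-1)Y+\mathfrak k)\mid Y\in\mathfrak p_2\,\}$; on $\mathfrak p_1$ the action of $\bar K$ is the isotropy representation of the irreducible symmetric space $G/H$, so $(\mathfrak p_1)^{\bar K}=0$, while on $\mathfrak q_2\cong\mathfrak p_2$ the summand $\mathfrak k$ of $\bar{\mathfrak k}$ acts trivially (being an ideal of $\mathfrak h$ complementary to $\mathfrak p_2$) and the complementary part of $\bar{\mathfrak k}$ acts via $\mathrm{ad}$ of $\mathfrak p_2$, so the fixed subspace is the center $\mathfrak z(\mathfrak p_2)$. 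Hence $(\bar{\mathfrak q})^{\bar K}\cong\mathfrak z(\mathfrak p_2)$.

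On the other hand $\mathfrak z(\bar{\mathfrak g})=\mathfrak z(\mathfrak g)\oplus\mathfrak z(\mathfrak h/\mathfrak k)=\mathfrak z(\mathfrak p_2)$, since $\mathfrak g$ is simple and $\mathfrak h/\mathfrak k\cong\mathfrak p_2$; moreover $\mathfrak z(\bar{\mathfrak g})\cap\bar{\mathfrak k}=0$ (immediate from the formulas for $\bar{\mathfrak k}$ and $\mathfrak z(\bar{\mathfrak g})$, using $\mathfrak k\cap\mathfrak p_2=0$), so the projection $\bar{\mathfrak g}\to\bar{\mathfrak q}$ — which on Killing fields is $\eta\mapsto\eta_{\bar o}$ — is injective on $\mathfrak z(\bar{\mathfrak g})$, and by the dimension count above it maps $\mathfrak z(\bar{\mathfrak g})$ onto $(\bar{\mathfrak q})^{\bar K}$. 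Now given $\xi\in\hat{\mathfrak k}$, choose $\eta\in\mathfrak z(\bar{\mathfrak g})\subseteq\mathfrak{tr}(\nabla^c)$ with $\eta_{\bar o}=\xi_{\bar o}$. Because $\eta$ is central in $\bar{\mathfrak g}$, its Killing field is $\mathrm{Tr}(\nabla^c)$-invariant, hence so is $\xi-\eta$; since $\xi-\eta$ vanishes at $\bar o$ and $\mathrm{Tr}(\nabla^c)$ is transitive, $\xi-\eta\equiv0$, i.e.\ $\xi=\eta\in\bar{\mathfrak g}$. This proves $\hat{\mathfrak k}\subseteq\bar{\mathfrak g}$ and hence the proposition.

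The main obstacle is the middle step — identifying $(\bar{\mathfrak q})^{\bar K}$, the space of $\bar G$-invariant vector fields on $G/K$, and checking that every such field is already generated by an element of $\mathfrak z(\bar{\mathfrak g})$, so that $\hat{\mathfrak k}$ contributes nothing new to the isometry algebra; conceptually this says that for $s^2\neq1$ the presentation $\bar G/\bar K$ has no infinitesimal isometries beyond those in $\bar{\mathfrak g}$. The hypothesis $s^2\neq1$ is used only through Lemma~\ref{lem:tr}; for $s=1$ the transvection group is smaller (equal to $G$), which is why that case is excluded here.
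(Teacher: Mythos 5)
Your proposal is correct and follows essentially the same route as the paper: reduce via Propositions~\ref{pro:irr} and~\ref{pro:re} and Lemma~\ref{lem:tr} to showing $\hat K\subset\bar G$, then identify the $\mathrm{Ad}(\bar K)$-fixed vectors in $\bar{\mathfrak q}$ with $\mathfrak z(\bar{\mathfrak g})=\mathfrak z(\mathfrak h/\mathfrak k)$ using the irreducibility of $G/H$ and $[\mathfrak k,\mathfrak p_2]=0$, so that every $\bar G$-invariant field is the Killing field of a central element of $\bar{\mathfrak g}$. The only difference is presentational: you phrase the key computation as a dimension count against $\mathfrak z(\mathfrak p_2)$ and make explicit the step that two $\bar G$-invariant fields agreeing at the basepoint coincide, while the paper carries out the same bracket computations directly and exhibits the central element $(0,0,-X_2+\mathfrak k)$.
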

\begin{proof}
By Proposition \ref{pro:re} and Lemma \ref{lem:tr} it is sufficient to prove that 
$\hat K\subset\bar G$, where $\hat K$ denotes the connected Lie subgroup of the Lie group of 
$\nabla^c$-affine transformations of $(\bar G/\bar K,g_s)$ whose Lie algebra consists of the $\bar G$-invariant fields whose associated flows
are $\nabla^c$-affine. Let $\mathfrak z(\bar{\mathfrak g})$ and 
$\mathfrak z(\mathfrak h/\mathfrak k)$ be the centers of $\bar{\mathfrak g}$ and
$\mathfrak h/\mathfrak k$ respectively. It is easy see that every 
$X\in \mathfrak z(\bar{\mathfrak g})=\mathfrak z(\mathfrak h/\mathfrak k)$ induces 
a $\bar G$-invariant field on $(\bar G/\bar K,g_s)$ (which coincides with the Killing vector field on $(\bar G/\bar K,g_s)$ induced by $X$). Conversely, let $\hat X$ be a $\bar G$-invariant field, 
and let $X\in \bar{\mathfrak q}$ such that $X.(1,1K)\bar K=\hat X_{(1,1K)\bar K}$. 
Then $X$ is a fixed vector of $\mathrm{Ad}(\bar K)$, and hence 
$\mathrm{ad}(\bar{\mathfrak k})X=0$. We write $X=X_1+(0,s^2X_2,(s^2-1)X_2+\mathfrak k)$, 
where $X_1\in\mathfrak p_1$, $X_2\in \mathfrak p_2$. Then, for all $Z_1\in\mathfrak k$ we have
\begin{align*}0&=\mathrm{ad}(Z_1) (X_1+(0,s^2X_2,(s^2-1)X_2+\mathfrak k))\\
&=[Z_1,X_1].
\end{align*}
Hence $\mathrm{ad}(\mathfrak k)X_1=0$. Analogously, for all $Z_2\in\mathfrak p_2$ we have
\begin{align*}
0&=\mathrm{ad}((0,Z_2,Z_2+\mathfrak k))(X_1+(0,s^2X_2,(s^2-1)X_2+\mathfrak k))\\
&=[Z_2,X_1] +(0,s^2[Z_2,X_2], (s^2-1)[Z_2,X_2]+\mathfrak k).
\end{align*}
Hence $\mathrm{ad}(\mathfrak p_2)X_1=0$, and $X_2$ centralizes $\mathfrak p_2$. 
Therefore $\mathrm{ad}(\mathfrak h)X_1=0$. Since $G/H$ is locally irreducible, $X_1=0$, 
and hence $X=(0,s^2X_2,(s^2-1)X_2+\mathfrak k)$, where 
$X_2+\mathfrak k\in\mathfrak z(\mathfrak h/\mathfrak k)$. Hence,
$X.(1,1K)\bar K=(0,s^2X_2,(s^2-1)X_2+\mathfrak k).(1,1K)\bar K=
(0,0,-X_2+\mathfrak k).(1,1K)\bar K$, where $X_2+\mathfrak k\in\mathfrak z(\mathfrak h/\mathfrak k)$.
\end{proof}
%%%%%%%%%%%%%%%%%%%%%%%%%%%%%%%%%%%%%%%%%%%%%%%
\begin{proposition}
\label{pro:iso2}
Let $G$ be a simple Lie group. Assume that $(G/K,g_1)$ is neither isometric to a round sphere, nor to a real projective space. If $K$ is nontrivial, then $G\times H/K$ (almost direct product) is the identity component of the isometry group of $(G/K,g_1)$.   
\end{proposition}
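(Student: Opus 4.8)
The plan is to follow the template of the proof of Proposition~\ref{pro:iso1}; the only new feature is that for $s=1$ the transvection group drops from $\bar G$ to $G$ by Lemma~\ref{lem:tr}, so the isometries coming from the right $H/K$-action must now be recovered inside the factor $\hat K$ rather than inside $\mathrm{Tr}(\nabla^c)$.

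First I would check that Proposition~\ref{pro:re} applies to $M=(G/K,g_1)$: it is compact since $G$ is compact, it is naturally reductive with respect to the $\bar G$-decomposition $\bar{\mathfrak g}=\bar{\mathfrak k}+\bar{\mathfrak q}$ of Section~\ref{sec:2}, it is locally irreducible by Proposition~\ref{pro:irr} (as $G$ is simple), and by hypothesis it is not isometric to a round sphere or to a real projective space. Hence $\mathrm I_0(G/K,g_1)=\mathrm{Tr}(\nabla^c)\,\hat K$; by Lemma~\ref{lem:tr} the group $\mathrm{Tr}(\nabla^c)=G$ acts by left translations, while $\hat K$ is the connected subgroup whose Lie algebra is the space of $G$-invariant vector fields on $G/K$ with $\nabla^c$-affine flow. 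It therefore suffices to show $\hat K=H/K$, where $H/K$ acts on the right as in the Introduction. One inclusion is immediate: the right $H/K$-action is the restriction of the $\bar G$-action, so it preserves the canonical connection $\nabla^c$ of the $\bar G$-naturally reductive structure, and it commutes with the left $G$-action; hence its infinitesimal generators lie in $\mathrm{Lie}(\hat K)$, i.e.\ $H/K\subseteq\hat K$.

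For the reverse inclusion, let $\hat X$ be a $G$-invariant field and write its value at the basepoint $1K$ as $X=X_1+X_2$ with $X_1\in\mathfrak p_1$, $X_2\in\mathfrak p_2$, using $\mathfrak q\cong\mathrm T_{1K}(G/K)$. Invariance means $X$ is $\mathrm{Ad}(K)$-fixed; since $\mathfrak k$ is an ideal of $\mathfrak h$ we have $[\mathfrak k,\mathfrak p_2]=0$, so this reduces to $[\mathfrak k,X_1]=0$. The crucial point, and the only place the hypothesis $K\neq\{1\}$ enters, is that this forces $X_1=0$: the subspace $\{\,Y\in\mathfrak p_1\mid[\mathfrak k,Y]=0\,\}$ is $\mathrm{ad}(\mathfrak h)$-invariant (because $\mathfrak k$ is an ideal of $\mathfrak h$, so $\mathrm{ad}([Z,W])Y=0$ whenever $Z\in\mathfrak h$, $W\in\mathfrak k$ and $[\mathfrak k,Y]=0$), and since $G/H$ is isotropy irreducible it equals $0$ or $\mathfrak p_1$; if it were $\mathfrak p_1$ then $[\mathfrak k,\mathfrak p_1]=0=[\mathfrak k,\mathfrak p_2]$, so $\mathfrak k$ would be an ideal of the simple algebra $\mathfrak g$, whence $\mathfrak k=0$, a contradiction. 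Thus $X=X_2\in\mathfrak p_2\subset\mathfrak h$, and $\hat X$ coincides with the Killing field induced by the element of $\mathfrak h/\mathfrak k$ generating the right $H/K$-action with the same value at $1K$ (two $G$-invariant fields agreeing at one point agree everywhere). Hence $\mathrm{Lie}(\hat K)\subseteq\mathfrak h/\mathfrak k$, so $\hat K=H/K$ and $\mathrm I_0(G/K,g_1)=G\cdot(H/K)=G\times H/K$ (almost direct product).

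I expect the main substantive step to be the isotropy-irreducibility argument just sketched, showing that a nontrivial $\mathfrak k$ has no nonzero $\mathrm{ad}(\mathfrak k)$-fixed vectors in $\mathfrak p_1$; everything else is bookkeeping, namely verifying the hypotheses of Proposition~\ref{pro:re} for $g_1$, checking that the canonical connection in play is the one of the $\bar G$-structure (so that $\bar G$, hence the right $H/K$-action, is $\nabla^c$-affine), and identifying the $G$-invariant vector fields on $G/K$ with those whose value at $1K$ is $\mathrm{Ad}(K)$-fixed.
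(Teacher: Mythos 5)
Your proposal is correct and follows essentially the same route as the paper: reduce via Proposition~\ref{pro:re} and Lemma~\ref{lem:tr} to computing $\hat K$, identify $G$-invariant fields with $\mathrm{Ad}(K)$-fixed vectors in $\mathfrak q$, and use irreducibility of $G/H$ plus simplicity of $\mathfrak g$ and $K\neq\{1\}$ to kill the $\mathfrak p_1$-component, leaving $F=\mathfrak p_2$, i.e.\ $\hat K=1\times H/K$. The only cosmetic difference is that you prove $\mathrm{ad}(\mathfrak h)$-invariance of the fixed set by a Jacobi-identity computation, while the paper argues on the group level that $H$ normalizes $K$ and hence preserves $F\cap\mathfrak p_1$.
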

\begin{proof}
We first observe that $(G/K,g_1)$ is a normal homogeneous space, and by 
Lemma~\ref{lem:tr}, $\mathrm{Tr}(\nabla^c)=G=G\times 1\subset G\times H/K$.
By Proposition~\ref{pro:re}, it is
sufficient to prove that $\hat K=1\times H/K$, where $\hat K$ denotes the connected Lie subgroup of the group of $\nabla^c$-affine transformations of $(G/K,g_1)$ whose Lie algebra
consists of the $G$-invariant fields whose associated flows are $\nabla^c$-affine.
Let $F$ be the set of the fixed vectors of the action of $\mathrm{Ad}(K)$ on $\mathfrak q$.
Let $\hat{\mathfrak k}$ be the Lie algebra of $G$-invariant fields on $(G/K,g_s)$. Then
$\hat{\mathfrak k}$ is the Lie algebra of $\hat K$ and this can be
naturally identified with $F$; see~\cite{Re} for details. Since 
$\mathrm{Ad}(K)$ preserves the decomposition $\mathfrak q=
\mathfrak p_1\oplus\mathfrak p_2$ and $\mathfrak p_2\subset F$ (recall that
$[\mathfrak k,\mathfrak p_2]=0$), we have that $F=\mathfrak p_2\oplus\{X\in\mathfrak p_1 \mid
\mathrm{Ad}(K)X=X\}$. Assume that $X\in\mathfrak p_1$ is a nontrivial fixed vector of $\mathrm{Ad}(K)$. Since $H$ normalizes $K$, it preserves $F\cap\mathfrak p_1$. By irreducibility of $G/H$, $F\cap\mathfrak p_1=\mathfrak p_1$. This implies that $[\mathfrak k,\mathfrak p_1]=0$. It follows that $\mathfrak k=0$ since $G$ is simple.
This contradicts the hypothesis, and hence $F=\mathfrak p_2$. Moreover, if 
$\hat X$ is the $G$-invariant field on $G/K$ induced by $X\in \mathfrak p_2$, then the flow of
$\hat X$ is given by $\{(1, \exp{-tX}K)\}\subset 1\times H/K\subset \mathrm{I}_0(G/K,g_1)=\mathrm{Aff}_0(\nabla^c)$.  
\end{proof}

%%%%%%%%%%%%%%%%%%%%%%%%%%%%%%%%%%%%%%%%%%%%%%%%%%%%%%%%%%%%%%%%%%%%%%%%%%%%%%%%%%
\section{Examples}
\label{sec:5}
\begin{enumerate}
\item Consider the $H$-bundle $G\to G/H$, where $G/H$ is a symmetric space of
compact type. In this case we have natural liftings of hyperpolar actions
on $G/H$ to the compact Lie group $G$ equipped with left-invariant metrics $g_s$, $s>0$.         
\item Let $G/H$ be a symmetric space of compact type, where $H$ is connected. If $H=K_1K_2$ (almost direct product) where $K_1$ and $K_2$ are connected, then we have two principal bundles $G/K_1\to G/H$ and $G/K_2\to G/H$. In this case we have natural liftings of
hyperpolar actions on $G/H$ to $(G/K_1,g_s)$ and $(G/K_2,g_s)$. It follows from Table~\ref{tab:1} that we can exhibit examples of hyperpolar actions with cohomogeneity greater than one on locally irreducible homogeneous spaces with nonnegative curvature which are not homeomorphic to symmetric spaces.
\begin{table}[ht]
\caption{}
\renewcommand\arraystretch{1.5}
\begin{tabular}{|c|c|c|}
\hline  
$G$           &$K_1$                   &$K_2$                             \\ \hline
$SU(4)$ &$SO(3)$&$SO(3)$\\ \hline
$SU(p+q)$ & $SU(p)SU(q)$ & $U(1)$                             \\ \hline
$SU(p+q)$ &$SU(p)U(1)$    & $SU(q)$                     \\ \hline
$SO(p+q)$ &$SO(p)$           &$SO(q)$                            \\ \hline
$SO(4+q)$&$SO(3)SO(q)$&$SO(3)$\\ \hline
$SO(2n)$    &$SU(n)$           & $U(1)$                             \\  \hline                   
$Sp(n)$       &$SU(n)$           &$U(1)$                               \\  \hline        
 $Sp(p+q)$  &$Sp(p)$           &$Sp(q)$      \\ \hline
 $E_6$         &$SU(6)$          &$SU(2)$          \\ \hline
$E_6$          &$Spin(10)$         &$U(1)$     \\ \hline
$E_7$          &$Spin(12)$&$SU(2)$ \\\hline
$E_7$&$E_6$&$U(1)$\\\hline
$E_8$&$E_7$&$SU(2)$\\\hline
$F_4$&$Sp(3)$&$SU(2)$\\ \hline
$G_2$&$SU(2)$&$SU(2)$\\ \hline
\end{tabular} 
\label{tab:1} 
\end{table}
\item Consider the $U(1)$-bundle $SU(n+1)/SU(n)\to SU(n+1)/S(U(n)U(1))$ and
the $Sp(1)$-bundle $Sp(n+1)/Sp(n)\to Sp(n+1)/Sp(n)Sp(1)$. By~\cite[Proposition $2.1$]{PoTh2002} and~\cite[Proposition $4.16$]{Te}, sections of polar actions on $\mathbb CP^n=SU(n+1)/S(U(n)U(1))$ and $\mathbb HP^n=Sp(n+1)/Sp(n)Sp(1)$
are totally real with respect to the complex and quaternionic structure respectively, and by ~\cite[Theorems 2.A.1 and 2.A.2]{PoTh1999}, these sections are isometric to real projective spaces. This implies that the condition $[\mathfrak m,\mathfrak m]\subset \mathfrak k$ (given in Theorem~\ref{th:main}) is always satisfied for polar actions on $\mathbb CP^n$ and $\mathbb HP^n$. Therefore, we get examples of polar actions  with nonflat sections on $(S^{2n+1}=SU(n+1)/SU(n),g_s)$ and $(S^{4n+3}=Sp(n+1)/Sp(n),g_s)$, in particular on Berger spheres.
\end{enumerate}
\section*{Acknowledgement}
The author would like to thank Claudio Gorodski for useful discussions and valuable comments.

\bibliographystyle{amsplain}
\bibliography{bibliography}

\providecommand{\bysame}{\leavevmode\hbox to3em{\hrulefill}\thinspace}
\providecommand{\MR}{\relax\ifhmode\unskip\space\fi MR }
% \MRhref is called by the amsart/book/proc definition of \MR.
\providecommand{\MRhref}[2]{%
  \href{http://www.ams.org/mathscinet-getitem?mr=#1}{#2}
}
\providecommand{\href}[2]{#2}
\begin{thebibliography}{10}

\bibitem{BCO}
J.~Berndt, S.~Console, and C.~Olmos, \emph{Submanifolds and holonomy}, Chapman
  \& Hall/CRC, Boca Raton, 2003.

\bibitem{Da}
J.~Dadok, \emph{Polar coordinates induced by actions of compact {L}ie groups},
  Trans. Amer. Math. Soc. \textbf{288} (1985), no.~1, 125--137.

\bibitem{AZ}
J.~E. D'Atri and W.~Ziller, \emph{Naturally reductive metrics and {E}intein
  metrics on compact {L}ie groups}, Mem. Amer. Math. Soc. \textbf{18} (1979),
  no.~215.

\bibitem{Go}
C.~Gorodski, \emph{Polar actions on compact symmetric spaces which admit a
  totally geodesic principal orbit}, Geom. Dedicata \textbf{103} (2004), no.~1,
  193--204.

\bibitem{GoTh}
C.~Gorodski and G.~Thorbergsson, \emph{Representations of compact {L}ie groups
  and the osculating spaces of their orbits}, preprint, University of Cologne,
  2000 (also e-print math.DG/0203196).

\bibitem{KN}
S.~Kobayashi and K.~Nomizu, \emph{Foundations of {D}ifferential {G}eometry},
  vol.~2, Interscience Publishers, New York, London, 1969.

\bibitem{Ko2001}
A.~Kollross, \emph{A classification of hyperpolar and cohomogeneity one
  actions}, Trans. Amer. Math. Soc. \textbf{354} (2002), no.~2, 571--612.

\bibitem{Ko2007}
\bysame, \emph{Polar actions on symmetric spaces}, J. Differential Geom.
  \textbf{77} (2007), no.~3, 425--482.

\bibitem{OlRe}
C.~Olmos and S.~Reggiani, \emph{The skew-torsion holonomy theorem and naturally
  reductive spaces}, preprint, 2008.

\bibitem{PaTe}
R.~S. Palais and C.~L. Terng, \emph{Critical {P}oint {T}heory and {S}ubmanifold
  {G}eometry}, Lecture Notes in Math., vol. 1353, Springer-Verlag, Berlin,
  Heidelberg, New York, 1988.

\bibitem{PoTh1999}
F.~Podest{\`a} and G.~Thorbergsson, \emph{Polar actions on rank-one symmetric
  spaces}, J. Differential Geom. \textbf{53} (1999), no.~1, 131--175.

\bibitem{PoTh2002}
\bysame, \emph{Polar and coisotropic actions on {K\"a}hler manifolds}, Trans.
  Amer. Math. Soc. \textbf{354} (2002), no.~5, 1759--1781.

\bibitem{Re}
S.~Reggiani, \emph{On the affine group of a normal homogeneous manifold}, Ann.
  Global Anal. Geom. \textbf{37} (2010), no.~4, 351--359.

\bibitem{Te}
S.~Tebege, \emph{Polar actions on {H}ermitian and quaternion-{K\"a}hler
  symmetric spaces}, Geom. Dedicata \textbf{129} (2007), no.~1, 155--171.

\bibitem{Zi}
W.~Ziller, \emph{The {J}acobi equation on naturally reductive compact
  {R}iemannian homogeneous spaces}, Comment. Math. Helv. \textbf{52} (1977),
  573--590.

\end{thebibliography}
\end{document}